\title{Nonsmooth Homoclinic Bifurcation in a Conceptual Climate Model}
\author{Julie Leifeld}
\begin{document}

\newtheorem{theorem}{Theorem}
\newcommand{\ode}[2]{\dfrac{d{#1}}{d{#2}}}
\newcommand{\pde}[2]{\frac{\partial{#1}}{\partial{#2}}}
\newcommand{\red}[1]{\textcolor{red}{#1}}
\newcommand{\sgn}{\mathop{\mathrm{sgn}}}
\newcommand{\Tr}{\text{Tr}}
\newcommand{\ep}{\varepsilon}

\begin{abstract}

Collision of equilibria with a splitting manifold has been locally studied, but might also be a contributing factor to global bifurcations.  In particular a boundary collision can be coincident with collision of a virtual equilibrium with a periodic orbit, giving an analogue to a homoclinic bifurcation.  This type of bifurcation is demonstrated in a nonsmooth climate application.  Here we describe the nonsmooth bifurcation structure, as well as the smooth bifurcation structure for which the nonsmooth homoclinic bifurcation is a limiting case.

\end{abstract}

\maketitle

\section{Introduction}
\label{sec:intro}

Recently a lot of work has gone into classifying nonsmooth bifurcations.  In particular, bifurcations in two dimensional piecewise smooth systems are believed to be well understood, and have been described in \cite{Colombo12, diBernardo, Guardia11, Kuznetsov03, makarenkov, filippov}.  Here, we look at a particular type of piecewise smooth bifurcation, called {\it{border collision}}.  This bifurcation occurs when an equilibrium point collides with a curve of discontinuity, or {\it{splitting manifold}}.  If the equilibrium is a node, the existing literature has focussed on whether the equilibrium persists or is annihilated by the collision.  However, Filippov analysis of a conceptual climate model, called Welander's model, shows that there are other interesting questions to be asked about border collision bifurcations.  In some cases, the node may persist but change stability, implying a fundamental change in the behavior of the system.  In this case, it is necessary to explore the global dynamics of the system to gain a full understanding.  In Welander's model this change in stability occurs simultaneously to a homoclinic explosion, in which infinitely many homoclinic orbits originate from the boundary equilibrium.  These homoclinic orbits are also not of similar ilk to many other homoclinic orbits seen in nonsmooth bifurcation classification, as they do not involve grazing of periodic orbits with splitting manifolds \cite{diBernardo,Guardia11,Kuznetsov03,makarenkov}.  As the system moves through the bifurcation, the homoclinic orbits are instantaneously destroyed, leaving behind a large periodic orbit.  After briefly describing some necessary vocabulary in Section \ref{sec:nonsmooth}, the climate model will be introduced in Section \ref{sec:wel}, and both a local and global analysis of the model will be discussed in Sections \ref{sec:collision} and \ref{sec:global}.  Finally, a description of the smooth phenomena which limits to the nonsmooth bifurcation will be undertaken in Section \ref{sec:smooth}.

\section{Filippov Analysis of Nonsmooth Systems}
\label{sec:nonsmooth}

It is necessary to include a brief note about notation.  A piecewise smooth system can be written in the form
\begin{equation}
\dot{x}=f(x,\lambda)
\end{equation}
with $f$ a vector function depending on the variable $x$ and a nonsmooth function $\lambda$,
\begin{equation}
\label{eq:lambda}
\lambda=\left\{\begin{array}{cl} 1 & h(x)>0 \\ 0 & h(x)<0.\end{array}\right.
\end{equation}
$h(x)$ is a scalar function which defines the location of the discontinuity curve, or {\it{splitting manifold}}.

Behavior of the system away from the splitting manifold can be determined using classical methods, but along the splitting manifold new dynamics must be defined.  Here we use Filippov's convex combination method \cite{filippov}.  If the vector fields on either side of the splitting manifold point in the same direction, one expects trajectories there to cross, and we call this a {\it{crossing region}}.  Alternatively, if both vector fields point either toward or away from the splitting manifold, trajectories slide along the manifold, and we call this a {\it{sliding region}}.  Sliding regions exist when the system of equations
\begin{equation}
\label{eq:s}
\begin{array}{r}S=f(x;\lambda)\cdot\nabla h(x)=0\\
h(x)=0. \end{array}
\end{equation}
can be solved for $\lambda\in[0,1]$.  When $f(x,\lambda)$ has a linear dependence on $\lambda$, this corresponds to locations for which the convex combination of the vector fields contains a vector in the direction of the splitting manifold.  Stability of the splitting manifold itself can be found using \eqref{eq:s}.  If
\[
\ode{}{\lambda} S<0
\]
the sliding region is stable, and if
\[
\ode{}{\lambda} S>0
\]
the sliding region is unstable.

Sliding flow along the splitting manifold can be determined by solving \eqref{eq:s} for $\lambda^*(x)$, then looking at the dynamics of
\begin{equation}
\label{eq:slidingflow}
\dot{x}=f(x,\lambda^*(x)).
\end{equation}
Dynamical phenomena from \eqref{eq:slidingflow} can then be determined in the normal way.  Equilibria of equation \eqref{eq:slidingflow} are called {\it{pseudoequilibria}}.  These pseudoequilibria are {\it{pseudonodes}} if they are nodes in the Filippov flow and their Filippov stability agrees with the stability of the sliding region.  Otherwise they are called {\it{pseudosaddles}}.

\section{The Conceptual Climate Model}
\label{sec:wel}
Welander's Model \cite{Welander82} is an ocean convection model, given here in nondimensionalized form,
\begin{equation}
\label{eq:Welandersystem}
\begin{array}{c}\begin{array}{rcl} \dot{T}&=&1-T-k(\rho)T\\ \dot{S}&=&\beta(1-S)-k(\rho)S, \end{array} \\[8pt]
 \\
\rho=-\alpha T+S.
\end{array}
\end{equation}
$T$ and $S$ are temperature and salinity of the surface ocean, and $\rho$ is water density, given by a linear combination of the variables.  $k(\rho)$ is a convective mixing function, which is assumed to be small when $\rho$ is small, and large when $\rho$ is large.  Welander gives two specific functions for $k(\rho)$, 
\begin{equation}
\label{eq:ksmooth}
k(\rho)=\frac{1}{\pi}\tan^{-1}\left(\frac{\rho-\varepsilon}{a}\right)+\frac{1}{2},
\end{equation}
and the pointwise limit,
\begin{equation}
\label{eq:knonsmooth}
k(\rho)=\left\{\begin{array}{rl}1 & \rho>\varepsilon \\ 0 &\rho<\varepsilon. \end{array}\right.
\end{equation}
Here, we focus on the nonsmooth model, although the corresponding smooth model will be discussed in Section \ref{sec:smooth}.

Welander's Model has three parameters, $\alpha$, $\beta$, and $\ep$.  We will use Welander's values for $\alpha$ and $\beta$, 
\begin{equation}
\label{alpha}
\alpha=\dfrac{4}{5},
\end{equation}
and
\begin{equation}
\label{beta}
\beta=\dfrac{1}{2}.
\end{equation}
$\ep$ will be the bifurcation parameter.  Because the location of the discontinuity in equation \eqref{eq:knonsmooth} is a function of $\ep$, we will also do a preliminary coordinate change, 
\[
\begin{array}{rcl} x&=&T\\
y&=&S-\alpha T-\varepsilon, \end{array}
\]
giving a new system
\begin{equation}
\label{eq:Welcc}
\begin{array}{l}
\begin{array}{rcl}\dot{x}&=&1-x-k(y)x\\
\dot{y}&=&\beta-\beta\varepsilon-k(y)\varepsilon-\alpha-(\beta+k(y))y-(\alpha\beta-\alpha)x
\end{array}\\
 \\
k(y)=\left\{\begin{array}{rl}1 & y>0\\ 0 & y<0\end{array}\right. .
\end{array}
\end{equation}
Because this coordinate change is linear, it causes no difficulties in the Filippov analysis of the system.  In this case, $k$ is of the form of equation \eqref{eq:lambda} in Section \ref{sec:nonsmooth}.  Welander noted that for small, negative $\ep$, the nonsmooth system contains two stable virtual equilibria, whose stability is parameter independent.  Attraction to these virtual equilibria combined with switching across the splitting manifold give rise to a periodic orbit for certain values of $\ep$.  However, this periodic orbit does not exist for all values of $\ep$.  At $\ep=0$, a {\it{fused focus bifurcation}} occurs, in which the sliding region changes stability, birthing a stable periodic orbit.  One might also expect some bifurcation structure pertaining to collision of a virtual equilibrium with the splitting manifold.  In particular, if one of the virtual equilibria in Welander's model crosses the splitting manifold, it should become a global attractor, influencing the global dynamics.  In fact this does occur in an $\ep$ range, in a bifurcation which destroys the periodic orbit.  We will look at this bifurcation locally and globally in the next sections.

\section{Border Collision in the Model}
\label{sec:collision}

Oscillation in Welander's model is predicated on attraction to two different virtual equilibria.  However, it is immediately clear that parameter changes can transition these virtual equilibria into globally stable, real equilibria.  The location of the virtual equilibria are given by the system of equations
\begin{equation}
\label{eqloc}
\begin{array}{rcl}
x&=&\dfrac{1}{1+k}\\[8pt]
y&=&\dfrac{\beta-(\beta+k)\varepsilon-\alpha-(\alpha\beta-\alpha)x}{\beta+k}.
\end{array}
\end{equation}
This gives, for $k=1$, 
\begin{equation}
\label{eq:eq1}
(x,y)=\left(\dfrac{1}{2},-\dfrac{1}{15}-\varepsilon\right),
\end{equation}
and for $k=0$, 
\begin{equation}
\label{eq:eq2}
(x,y)=\left(1,\dfrac{1}{5}-\varepsilon\right).
\end{equation}

The $x$ locations of the equilibria are fixed, but the equilibria are located on the splitting manifold for 
\begin{equation}
\label{eq:v1}
\varepsilon=\dfrac{1}{5}
\end{equation}
and
\begin{equation}
\label{eq:v2}
\varepsilon=-\dfrac{1}{15}.
\end{equation}
When 
\[
0<\varepsilon<\dfrac{1}{5},
\]
solutions are attracted to a globally stable pseudoequilibrium.  So, although the equilibrium boundary collision is interesting, it does not qualitatively change the dynamics of the system.  Instead, the equilibrium escapes the splitting manifold and becomes a more classically recognizable, real equilibrium.  

However, there is a fused focus Hopf bifurcation which occurs when $\varepsilon=0$.  The stable Filippov equilibrium becomes unstable, giving rise to a stable periodic orbit.  This makes the dynamics of the second border collision \eqref{eq:v2} more globally interesting.  For convenience we will define
\begin{equation}
\label{eq:ep0}
\ep_0=-\dfrac{1}{15}.
\end{equation}

In the interest of facilitating the discussion of the global bifurcation, we will look at the boundary collision in a nonstandard direction, as a pseudoequilibrium which becomes real when it escapes the splitting manifold through an endpoint of a sliding region.  The following theorems address the existence and stability of the equilibrium and pseudoequilibrium as the system moves through the bifurcation.

\begin{theorem}
\label{thm:pseudonode}
For $\ep_0<\varepsilon<0$, there exists a unique unstable pseudonode.
\end{theorem}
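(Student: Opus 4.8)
The plan is to collapse the statement to a one-dimensional problem: write down the Filippov sliding flow on $y=0$ explicitly, locate its equilibria, and classify the one that lands in the sliding region. First I would substitute $h(x,y)=y$ into \eqref{eq:s}. On the splitting manifold the term $-(\beta+k)y$ drops out, so $S$ depends on $k=\lambda$ only through $-\lambda\ep$, and $S=0$ solves linearly for
\[
\lambda^*(x)=\frac{\tfrac{2}{5}x-\tfrac{3}{10}-\tfrac{\ep}{2}}{\ep},
\]
using $\alpha=\tfrac45$, $\beta=\tfrac12$. Since $\lambda^*$ is monotone in $x$, the sliding region is the closed segment between $x_R=\tfrac34+\tfrac54\ep$ (where $\lambda^*=0$) and $x_L=\tfrac34+\tfrac{15}{4}\ep$ (where $\lambda^*=1$), and for $\ep<0$ one checks $x_L<x_R$. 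Moreover $\ode{}{\lambda}S=-\ep>0$ throughout $\ep_0<\ep<0$, so by the criterion of Section~\ref{sec:nonsmooth} the sliding region is unstable.

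Next I would write the sliding flow \eqref{eq:slidingflow} along $y=0$ as $\dot x=g(x):=1-x-\lambda^*(x)\,x$ and clear denominators:
\[
Q(x):=-10\ep\,g(x)=4x^2-(3-5\ep)x-10\ep.
\]
Because $-10\ep>0$, $Q$ is an upward-opening quadratic with the same sign as $g$, so its roots are exactly the pseudoequilibria on $y=0$. Its discriminant is $D(\ep)=25\ep^2+130\ep+9$; viewed as a parabola in $\ep$ it has vertex at $\ep=-\tfrac{13}{5}$, hence is increasing on $(\ep_0,0)$, and $D(\ep_0)=\tfrac49>0$, so $D>0$ on the entire range and $Q$ has two distinct simple roots $x_-<x_+$. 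In particular $g'\neq0$ at each root, so a pseudoequilibrium lying in the sliding region is automatically a node of the one-dimensional Filippov flow.

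For uniqueness I would evaluate $g$ at the two endpoints of the sliding segment, where $\lambda^*$ equals $0$ and $1$: $g(x_R)=1-x_R=\tfrac14-\tfrac54\ep>0$, while $g(x_L)=1-2x_L=-\tfrac12-\tfrac{15}{2}\ep$, which is strictly negative precisely because $\ep>\ep_0$. Thus $Q(x_R)>0$ and $Q(x_L)<0$; since $x_R$ exceeds the vertex $\tfrac{3-5\ep}{8}$ (true whenever $\ep>-\tfrac15$), this forces the ordering $x_-<x_L<x_+<x_R$. Hence $x_+$ is the only root lying in the sliding region $[x_L,x_R]$, so the sliding flow has a unique pseudoequilibrium $(x_+,0)$, and it is a node since $D>0$. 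Finally $x_+$ lies on the increasing branch of $Q$, so $Q'(x_+)=8x_+-(3-5\ep)>0$ and therefore $g'(x_+)=Q'(x_+)/(-10\ep)>0$: the pseudoequilibrium is repelling along the splitting manifold. Since this agrees with the unstable (transverse) stability of the sliding region found above, $(x_+,0)$ is an unstable pseudonode.

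I expect the only genuine pitfall to be sign bookkeeping around $\ep<0$: the sliding region is then of the repelling type, and the $\lambda^*=0$ and $\lambda^*=1$ endpoints flank $x_+$ in the opposite order from the $\ep>0$ picture, so one must keep careful track of which endpoint is which and of the direction of every inequality obtained by multiplying through by $\ep$ or by $-10\ep$. One should also confirm against the conventions of Section~\ref{sec:nonsmooth} that, in the repelling-sliding case, ``pseudonode'' indeed requires the along-manifold flow to be repelling as well --- which is exactly what $g'(x_+)>0$ provides. Everything else reduces to elementary facts about a single quadratic.
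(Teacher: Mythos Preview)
Your argument is correct and follows the same overall skeleton as the paper's proof: compute $\ode{}{\lambda}S=-\ep>0$ to see the sliding region is repelling, derive the same quadratic $4x^2-(3-5\ep)x-10\ep=0$ for pseudoequilibria (the paper's equation \eqref{eq:pseudo}), show exactly one root lies in the sliding segment, and then verify that the Filippov flow is repelling there. The difference is in how the existence/uniqueness step is carried out. The paper inverts the quadratic to write $\ep=\dfrac{4x^2-3x}{10-5x}$, notes this curve is monotone on the relevant $x$-range, and then --- via a ``change of perspective'' --- checks that this $\ep$-value is sandwiched between the two sliding-boundary lines $\ep=\tfrac{4}{5}x-\tfrac{3}{5}$ and $\ep=\tfrac{4}{15}x-\tfrac{1}{5}$ for $x\in(\tfrac12,\tfrac34)$. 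Your route is more direct: fixing $\ep$, you evaluate $g$ at the two endpoints $x_L,x_R$ of the sliding segment (where $\lambda^*=1,0$), obtain a sign change, and let the convex shape of $Q$ pin down the unique interior root. This buys you a cleaner argument that avoids the somewhat awkward inversion, and it also feeds straight into your stability calculation (since $x_+$ sits on the increasing branch of $Q$, $g'(x_+)>0$ is immediate), whereas the paper instead bounds $f'(x)$ using $\tfrac12<x<\tfrac34$. One small remark: your vertex check ``$x_R$ exceeds $(3-5\ep)/8$'' is not actually needed --- once $Q(x_L)<0$ gives $x_-<x_L<x_+$ and $x_R>x_L$, the fact that $Q(x_R)>0$ already forces $x_R>x_+$.
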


\begin{proof}
Stability of the sliding region is given by 
\[
\ode{}{\lambda}(f\cdot\nabla h)|_{y=0}=\ode{}{k}(\beta-\beta\varepsilon-k\varepsilon-\alpha-(\beta+k)y-(\alpha\beta-\alpha)x)|_{y=0}=-\varepsilon.
\]
So, for $\varepsilon<0$, the sliding region is unstable.  We must show that the sliding region contains a unique pseudoequilibrium, and that pseudoequilibrium is unstable.  We first show that it is unique.

Pseudoequilibria are found on regions of the splitting manifold where the vector fields are colinear.  This gives a condition on $x$ and $\varepsilon$
\begin{equation}
\label{eq:pseudo}
-10\varepsilon-3x+5x\varepsilon+4x^2=0.
\end{equation}
So, we need to show that for $\ep_0<\ep<0$, there is only one solution to this equation inside the sliding region.  This result can be clearly seen by plotting equation \eqref{eq:pseudo} and the bounds of the sliding region, as in Figure \ref{fig:thm1pic}, however for the sake of rigor we prove uniqueness below.

We can rewrite equation \eqref{eq:pseudo} as follows
\begin{equation}
\label{eq:pseudoep}
\ep=\dfrac{4x^2-3x}{10-5x},
\end{equation}
Noting that $x=2$ was also not a valid solution in the original equation.  The $\ep$ range $\ep_0<\ep<0$ corresponds to a range of $x$ values, 
\[
\dfrac{1}{2}<x<\dfrac{3}{4}.
\]
Therefore, because the only minimum of equation \eqref{eq:pseudoep} occurs at 
\[
x=2-\dfrac{\sqrt{10}}{2}
\]
and the asymptote occurs at $x=2$, if the pseudoequilibrium exists, it is unique.  To show that the pseudoequilibrium exists, we employ a change of perspective, which is possible only because the bounds of the sliding region are lines in $(x,\ep)$ space.  First it is obvious that a solution to equation \eqref{eq:pseudoep} exists for $\ep\in\left(\ep_0,0\right)$.  The boundaries of the sliding region are given by lines in $(x,\ep)$ space, 
\[
x=\dfrac{3}{4}+\dfrac{15}{4}\ep
\]
and 
\[
x=\dfrac{3}{4}+\dfrac{5}{4}\ep.
\]
Writing these instead as $\ep$ bounds, we see that 
\[
\dfrac{3}{4}+\dfrac{15}{4}\ep<x<\dfrac{3}{4}+\dfrac{5}{4}\ep
\]
if and only if 
\[
\frac{4}{5}x-\frac{3}{5}<\ep<\frac{4}{15}x-\frac{1}{5}.
\]
Analysis shows that 
\[
\frac{4x^2-3x}{10-5x}>\frac{4}{5}x-\frac{3}{5}
\]
and 
\[
\frac{4x^2-3x}{10-5x}<\frac{4}{15}x-\frac{1}{5}
\]
as long as 
\[
\frac{1}{2}<x<\frac{3}{4},
\]
meaning that the $\ep$ given by equation \eqref{eq:pseudoep} is in the correct bounds, so $x$ is a pseudoequilibrium for the given $\ep$.

We now show that the pseudoequilibrium is an unstable equilibrium of the sliding flow.  The sliding flow is found by solving the system \eqref{eq:s}, and then using equation \eqref{eq:slidingflow}.  In this system, $\lambda^*(x)=k^*(x)$, and the flow in the sliding region is given by
\[
\dot{x}=1-x-k^*(x)x
\]
The stability of the pseudoequilibrium is then given by the sign of the derivative of the right side of this equation, evaluated at the equilibrium.  The derivative is 
\[
f'(x)=1-k^*-\left(\ode{k^*}{x}\right)x=\dfrac{-3+5\ep+8x}{-10\ep}.
\]
We've already seen that for these $\ep$ values, the pseudoequilibrium is bounded in $x$ by 
\[
\dfrac{1}{2}<x<\dfrac{3}{4}.
\]
This gives us a range on $f'$, 
\[
\dfrac{1+5\ep}{-10\ep}<f'(\ep)<\dfrac{3+5\ep}{-10\ep}.
\]
Both the upper and lower bounds are positive for $\ep_0<\ep<0$, implying that the pseudoequilibrium is unstable.  So, in this parameter range the pseudoequilibrium is totally unstable.

\begin{figure}[t]
	\centering
	{\includegraphics[width=0.5\textwidth]{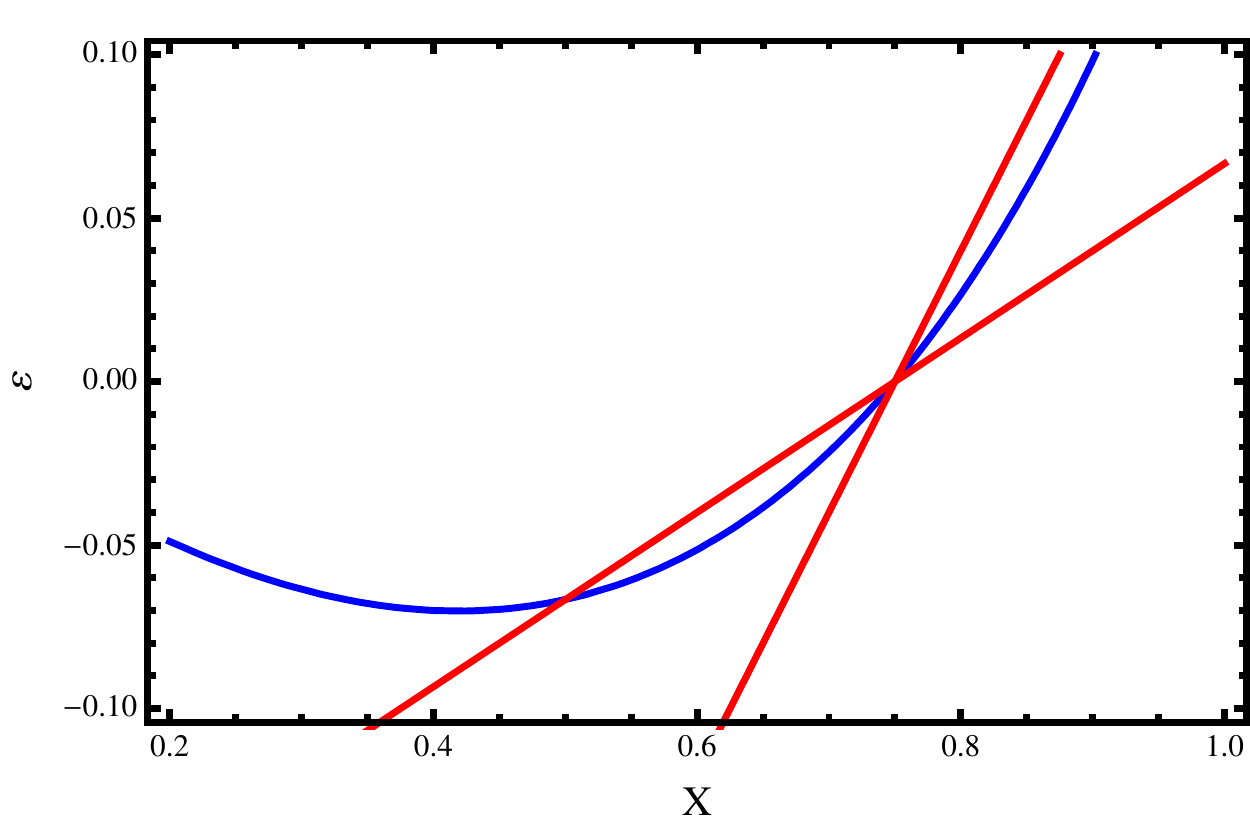}}
	
	\caption{Existence of a pseudoequilibrium of the sliding region.  Red lines represent the boundaries of the sliding region as functions of $x$.  The blue curve is the solution to equation \eqref{eq:pseudoep}.}
	\label{fig:thm1pic}
\end{figure}

\end{proof}

\begin{theorem}
\label{thm:stablebit}
For $\varepsilon<\ep_0$, there exists one globally stable, real equilibrium, and no pseudoequilibria.
\end{theorem}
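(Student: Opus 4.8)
The plan is to reduce the dynamics to the two affine flows governing $\{y>0\}$ and $\{y<0\}$. For $\varepsilon<\ep_0$ the $k=1$ equilibrium \eqref{eq:eq1} has $y$-coordinate $-\tfrac1{15}-\varepsilon>0$, so it lies in its own region $\{y>0\}$ and is a real equilibrium $E$, while the $k=0$ equilibrium \eqref{eq:eq2} has $y$-coordinate $\tfrac15-\varepsilon>0$ and is therefore virtual; since \eqref{eq:Welcc} is affine with nonsingular linear part on each side, $E$ is the only candidate real equilibrium. Linearizing \eqref{eq:Welcc} with $k=1$ gives eigenvalues $-2$ and $-\tfrac32$, so $E$ is a stable node, with parameter-independent stability. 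For the pseudoequilibria I would reuse the computation from the proof of Theorem~\ref{thm:pseudonode}: the pseudoequilibrium relation \eqref{eq:pseudoep} together with the sliding-region bounds is satisfiable only for $\tfrac12\le x\le\tfrac34$, which forces $\varepsilon=\tfrac{4x^2-3x}{10-5x}\in[\ep_0,0]$; hence there is no pseudoequilibrium when $\varepsilon<\ep_0$. Geometrically, at $\varepsilon=\ep_0$ the pseudoequilibrium, the endpoint of the sliding region, and the $k=1$ equilibrium all coalesce at $x=\tfrac12$, and for smaller $\varepsilon$ the equilibrium has escaped the splitting manifold.

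The remaining and substantive task is global stability of $E$. First I would record the structure on the splitting manifold: from $\dot y|_{y=0}=-\tfrac3{10}-\left(\tfrac12+k\right)\varepsilon+\tfrac25x$ the sliding region is the segment $L\le x\le R$ with $L=\tfrac34+\tfrac{15}{4}\varepsilon$, $R=\tfrac34+\tfrac54\varepsilon$, and for $\varepsilon<\ep_0$ one has $L<R$ and $L<\tfrac12$. On $\{y=0\}$ the flow crosses upward (into $\{y>0\}$) exactly where $x>R$, crosses downward where $x<L$, and slides on $(L,R)$, where the sliding is repelling (as in Theorem~\ref{thm:pseudonode}) and carries no pseudoequilibrium.

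Now the argument. While a trajectory lies in $\{y<0\}$ it obeys the $k=0$ flow, a stable node attracted to the virtual equilibrium $\left(1,\tfrac15-\varepsilon\right)$, a point above the splitting manifold; hence it reaches $\{y=0\}$ in finite time. Along this flow $y(t)$ is a polynomial of degree at most two in $e^{-t/2}$, so it has at most two zeros, which forces the first hit of $\{y=0\}$ to be transversal; thus $\dot y>0$ there and the hit occurs at some $x_1>R$. Once in $\{y>0\}$ the trajectory obeys the $k=1$ flow, in which $\dot x=1-2x$ moves $x$ monotonically toward $\tfrac12$, so $x(t)$ stays between $x_1$ and $\tfrac12$ and hence $x(t)\ge\min(R,\tfrac12)>L$ forever. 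Since $\dot y|_{k=1}>0$ on $\{y=0\}$ whenever $x>L$, the trajectory can never recross the splitting manifold downward (at any putative first return to $\{y=0\}$ one would have $\dot y|_{k=1}(x(t),0)>0$, a contradiction), so it stays in $\{y>0\}$ and converges to $E$. Finally, a trajectory starting in $\{y>0\}$ either converges to $E$ directly or leaves $\{y>0\}$, and the only escape is through $\{y=0,\ x<L\}$, after which the previous step applies. Hence every trajectory converges to $E$, so $E$ is globally asymptotically stable, and in particular it is the unique equilibrium.

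The crux, and the main obstacle I expect, is exactly this last step: for $\varepsilon\in(\ep_0,0)$ the model has a stable periodic orbit that ``ping-pongs'' between the two linear flows, so the proof must show this ping-pong terminates once $\varepsilon<\ep_0$. The mechanism above is that the upward- and downward-crossing arcs of $\{y=0\}$ are disjoint and one-directional, while the monotone $x$-dynamics of the $k=1$ flow confines any re-entering trajectory to $\{x>L\}$, from which $\{y>0\}$ cannot be left. Some care is still needed at the fold points $x=L,R$ and on the repelling sliding segment, but these are harmless: at $x=L$ the $k=1$ fold is invisible, since $\ddot y|_{k=1}=\tfrac25(1-2L)>0$ there for $\varepsilon<\ep_0$, so a grazing trajectory returns to $\{y>0\}$; and the repelling sliding segment contains no pseudoequilibrium, hence traps nothing.
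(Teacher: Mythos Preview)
Your argument is correct. For the existence and local stability of the real equilibrium, the virtuality of the $k=0$ equilibrium, and the nonexistence of pseudoequilibria, you follow essentially the same route as the paper: you read off the $y$--coordinates from \eqref{eq:eq1}--\eqref{eq:eq2}, compute the $k=1$ eigenvalues $-2$ and $-\tfrac32$, and then invoke the analysis of Theorem~\ref{thm:pseudonode} to see that the pseudoequilibrium curve \eqref{eq:pseudoep} lies in the sliding strip only for $\tfrac12<x<\tfrac34$, hence only for $\ep\in(\ep_0,0)$.

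Where you depart from the paper is in the treatment of \emph{global} stability. The paper's proof stops at the eigenvalue computation and the absence of pseudoequilibria; it does not supply a global argument, even though the statement asserts one. You fill this gap with a genuine phase--plane analysis: you identify the crossing and (repelling) sliding structure on $\{y=0\}$, show that any trajectory in $\{y<0\}$ must exit through $\{x>R\}$ (using the degree-two structure of $y(t)$ in $e^{-t/2}$ to force a transversal first hit), and then use the monotone $x$--dynamics $\dot x=1-2x$ of the $k=1$ flow to trap the trajectory in $\{x>L\}\cap\{y>0\}$, where it can never recross downward. The fold check at $x=L$ ($\ddot y|_{k=1}=\tfrac25(1-2L)>0$ since $L<\tfrac12$) closes the last loophole. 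This is exactly the mechanism that distinguishes $\ep<\ep_0$ from the oscillatory regime $\ep_0<\ep<0$, and it is a real addition to what the paper's proof contains.
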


\begin{proof}

First, the location of the equilibrium for the region $k=1$ is given by \eqref{eq:eq1}, which shows that this equilibrium is real in the specified parameter range.  The stability of the equilibrium is not dependent on $\varepsilon$, and has eigenvalues 
\[
\lambda=-2
\]
and 
\[
\lambda=-\dfrac{3}{2},
\]
indicating stability.  the equilibrium for the region $k=0$ remains virtual.  So, it only remains to prove that there exists no pseudoequilibria in this range.  This is done using the same method as in Theorem \ref{thm:pseudonode}, and again analysis shows that for $\ep<\ep_0$, the $\ep$ given by equation \eqref{eq:pseudoep} is outside the range of the sliding region.  So, no pseudoequilibrium exists in this parameter range.

\end{proof}

The local dynamics at $\varepsilon=\ep_0$, corresponding to the border collision, are unique to nonsmooth systems (See Figure \ref{fig:localpic}).  In the region $y>0$, all trajectories converge to the equilibrium.  The eigenvectors are both transverse to the splitting manifold, and do not depend on $\varepsilon$, or the location of the equilibrium.  However, in the region $y<0$, trajectories are repelled from the equilibrium, and are instead attracted to the virtual equilibrium of the $k=0$ system.  In the direction along the sliding region, the filippov flow takes solutions away from the equilibrium.  So, the equilibrium has two unstable directions, one along the splitting manifold, given by the Filippov flow, and the other transverse to the manifold, with infinite repulsion.  One might call this equilibrium a center of sorts, however the location of the stable and unstable directions topologically disallow this situation in a smooth system.  

However, this local picture is unsurprising in a nonsmooth system, and is in fact typical of a border collision.  What may be surprising is the change in stability of the equilibrium as the system passes through the bifurcation, which is intimately tied to the angle between the splitting manifold and the vector field in the $k=0$ region.  If the vector field in the $k=0$ region were vertical, the equilibrium would not persist as a psuedoequilibrium, and the structure of the bifurcation would be uninteresting.

\begin{figure}[t]
	\centering
	{\includegraphics[width=0.5\textwidth]{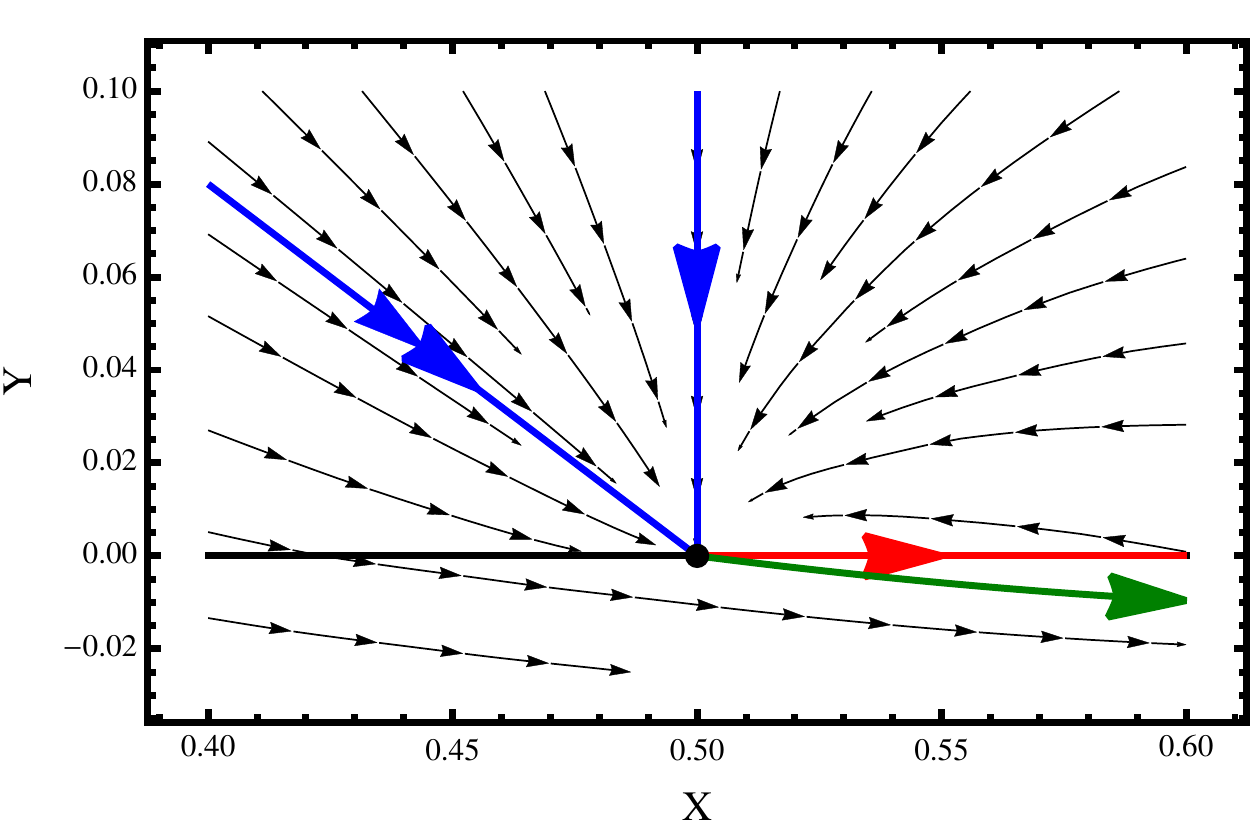}}
	
	\caption{Local stability of the boundary equilibrium.  Blue lines are in the direction of the eigenvectors.  The red line is the hyperbolic trajectory away from the equilibrium along the sliding region, and the green curve is the trajectory into the $y<0$ region, which leaves the equilibrium in finite time.}
	\label{fig:localpic}
\end{figure}

\section{Global Bifurcation Structure}
\label{sec:global}

The previous section discussed local dynamics as $\ep=\ep_0$, but the system also displays an interesting global bifurcation.  The globally stable periodic orbit transitions to a globally stable equilibrium point, through a homoclinic implosion of sorts.  The following theorems will explain this further.

\begin{theorem} For $\ep_0<\varepsilon<0$, there exists a periodic orbit which intersects the line $y=0$ with $x$ values in the interval
\[
I=\left(\dfrac{1}{2},\dfrac{3}{4}+\dfrac{15}{4}\varepsilon\right).
\]
\end{theorem}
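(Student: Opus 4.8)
The plan is to build the periodic orbit explicitly as a concatenation of two trajectory arcs joined across the splitting manifold, and then to verify that the induced first-return (Poincaré) map on a suitable segment of $\{y=0\}$ has a fixed point. First I would set up the relevant Poincaré section: the crossing region of the splitting manifold lying to the left of the sliding region, i.e. $x$-values with $x<\tfrac34+\tfrac{15}{4}\varepsilon$, together with the half-line $x>\tfrac34+\tfrac{5}{4}\varepsilon$ on the other side of the sliding interval. A trajectory entering $y>0$ from a point $(x_0,0)$ with $x_0$ in the crossing region is governed by the linear $k=1$ flow, which has the globally attracting (virtual, since $\varepsilon>\varepsilon_0$ forces $y<0$ for this equilibrium) node at $(1/2,-1/15-\varepsilon)$ with the eigenvalues $-2,-3/2$ from \eqref{eq:eq1}; since that equilibrium lies strictly below $y=0$, every such trajectory must return to $y=0$, and I get a well-defined map $P_+\colon x_0\mapsto x_1$. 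Symmetrically, a trajectory entering $y<0$ is governed by the linear $k=0$ flow whose equilibrium $(1,1/5-\varepsilon)$ from \eqref{eq:eq2} lies strictly above $y=0$ in this $\varepsilon$-range, so it too returns to $y=0$, giving $P_-\colon x_1\mapsto x_2$. The periodic orbit corresponds to a fixed point of $P=P_-\circ P_+$ inside $I$.

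The key steps, in order: (1) verify the transversality/return claims above — each affine flow has its node on the far side of $\{y=0\}$, so no trajectory starting on the crossing region can accumulate without crossing, and the return is transverse except at the endpoints of the sliding interval; (2) analyze the boundary behavior of $P$ on $I=(1/2,\tfrac34+\tfrac{15}{4}\varepsilon)$. At the left endpoint $x=1/2$, the $k=1$ node sits directly below on the vertical line $x=1/2$, so the trajectory from $(1/2,0)$ is attracted straight down and — because the $k=1$ vector field at $y=0$ has $\dot x = 1-x-x = 1-2x$, which vanishes at $x=1/2$ — this is precisely the edge case; I would argue that points just to the right of $1/2$ map under $P_+$ to points that, combined with $P_-$, stay in $I$, i.e. $P$ maps a neighborhood of the left end back into $I$. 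At the right endpoint $x=\tfrac34+\tfrac{15}{4}\varepsilon$, which is the boundary between the crossing and sliding regions, a trajectory grazes the start of the (unstable) sliding segment; by Theorem~\ref{thm:pseudonode} the unstable pseudonode sits at the interior solution of \eqref{eq:pseudoep}, and the unstable Filippov flow pushes nearby sliding trajectories toward the crossing region, so points near this endpoint also return into $I$. (3) Conclude by continuity and the intermediate value theorem: $P\colon I\to I$ (or at least $P$ maps a closed subinterval into itself), hence has a fixed point; uniqueness, if wanted, follows from monotonicity of the two affine return maps, but existence is all the statement claims.

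The main obstacle I expect is step (2) at the right endpoint, where the crossing region abuts the sliding region: the return map $P_+$ need not extend continuously past $x=\tfrac34+\tfrac{15}{4}\varepsilon$, since trajectories beyond it hit the sliding set rather than crossing, and one must show $P$ does not carry points out of $I$ through that edge. The cleanest route is to use the instability of the sliding region (established in Theorem~\ref{thm:pseudonode} via $\frac{d}{d\lambda}S=-\varepsilon>0$): any trajectory that would reach $\{y=0\}$ inside the sliding interval is instead repelled back into $\{y<0\}$ before $x$ decreases to the pseudonode value, so effectively the image of $P_+$ stays at or to the right of the pseudonode location and $P_-$ brings it back into $I$. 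Making this ``repelled back'' argument quantitative — i.e. controlling where such a trajectory re-enters $y<0$ relative to the interval $I$ — is the delicate point; everything else reduces to explicit estimates on the two linear flows of \eqref{eq:Welcc}, analogous to the bounds already carried out in the proof of Theorem~\ref{thm:pseudonode}.
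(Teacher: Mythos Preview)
Your overall strategy---build a first-return map on a cross-section of $\{y=0\}$ and invoke Brouwer---is exactly the paper's. However, you have the crossing directions reversed, and this propagates into a misdiagnosed ``main obstacle.''

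On $I=\bigl(\tfrac12,\tfrac34+\tfrac{15}{4}\varepsilon\bigr)$ both vector fields satisfy $\dot y<0$ at $y=0$, so trajectories leave $I$ \emph{downward} into the $k=0$ region; they re-emerge only on the right crossing segment $x>\tfrac34+\tfrac{5}{4}\varepsilon$ (the sole place where $\dot y|_{k=0}>0$), and only then enter $y>0$ under the $k=1$ flow. Thus your $P_-$ acts first on $I$ and $P_+$ second, not the order you wrote; and the vertical invariant line $x=\tfrac12$ of the $k=1$ system (from $\dot x=1-2x$) is a barrier \emph{in the upper half-plane}, not a trajectory ``attracted straight down'' from $(\tfrac12,0)$ into $y<0$, where the $k=0$ flow governs and has $\dot x=1-x\neq0$.

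With the geometry corrected, the difficulty you anticipate at the right endpoint of $I$ disappears. The $k=1$ return from $y>0$ can hit $y=0$ only where $\dot y|_{k=1}<0$, i.e.\ at $x<\tfrac34+\tfrac{15}{4}\varepsilon$; combined with the barrier at $x=\tfrac12$ this forces the landing point into $I$ automatically. No repulsion argument from the unstable sliding region, no tracking of where trajectories exit the sliding set, and no use of the pseudonode location are required. The paper's proof is precisely this: the full return map sends $I$ into itself, so Brouwer yields the periodic orbit.
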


\begin{proof}
We consider the return map of trajectories from $I$ to $I$, and show that the interval maps to a subset of itself.  Then, the Brouwer Fixed Point Theorem immediately gives the existence of a periodic orbit.  This is in fact very simple to show.  First, we note that trajectories with initial values in this interval return to the line $y=0$ in a bounded interval, with $x>0$.  Moreover, all trajectories originating from the splitting manifold with 
\[
x>\dfrac{3}{4}+\dfrac{15}{4}\varepsilon
\]
will intersect the splitting manifold again in $I$.  This is a result of uniqueness of solutions in smooth dynamical systems, and a lack of invariant sets in the region $y>0$.  The argument is three fold.  For 
\[
x>\dfrac{3}{4}+\dfrac{15}{4}\varepsilon, 
\]
and $\ep_0<\varepsilon<0$, 
\[
f(x,0)\cdot\begin{pmatrix} 0\\1\end{pmatrix}>0, 
\]
meaning the vector field points away from the splitting manifold, so trajectories can't return to the splitting manifold to the right of the interval.  The system in $y>0$ is also linear, and the equilibrium has a vertical eigenvector $\vec=e_2$, so we immediately conclude the existence of a vertical trajectory along the line 
\[
x=\dfrac{1}{2}, 
\]
converging to the virtual equilibrium.  So, trajectories can't return to the splitting manifold to the left of the interval.  However, solutions are easily seen to be bounded, and the lack of invariant sets in the region $y>0$ indicates that solutions must return to the splitting manifold, indicating that the return map exists, and maps $I$ to itself.  So, there exists a periodic orbit intersecting the splitting manifold in this interval.
\end{proof}

One should note that this result immediately implies that the periodic orbit limits to an orbit through 
\[
x=\dfrac{1}{2}.  
\]
This is in fact a homoclinic orbit coincident to the equilibrium border collision.

\begin{theorem} 
\label{thm:homo}
For $\varepsilon=\ep_0$, there exists a homoclinic orbit through 
\[
x=x_0=\frac{1}{2}.
\]
\end{theorem}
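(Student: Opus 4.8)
The plan is to exhibit the homoclinic orbit explicitly, as the concatenation of an arc in the region $y<0$ and an arc in the region $y>0$. Throughout, fix $\varepsilon=\ep_0=-\tfrac{1}{15}$. By \eqref{eq:eq1} the equilibrium of the $k=1$ field is then exactly $p_0=(\tfrac12,0)$, which lies on the splitting manifold; this is the border-collision configuration of Section~\ref{sec:collision}, and $x_0=\tfrac12$ is the relevant $x$-coordinate. The first step is to check that the $k=0$ field at $p_0$ is nonzero and points into $\{y<0\}$ (a direct substitution gives $\dot y=-\tfrac{1}{15}<0$ there), so there is a well-defined forward trajectory $\gamma$ leaving $p_0$ into the lower region.

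The second step is to integrate the linear $k=0$ system along $\gamma$. Since $\dot x=1-x$ for $y<0$, the coordinate $x$ increases monotonically from $\tfrac12$ toward $1$; and since the $k=0$ equilibrium $(1,\tfrac15-\varepsilon)$ has positive $y$-coordinate (it is virtual), $\gamma$ cannot remain in $\{y<0\}$ forever and must return to $y=0$ in finite time. Solving the $2\times2$ system explicitly, $\gamma$ first meets $y=0$ again at $x=\tfrac79$. The decisive observation is that $\tfrac79>\tfrac23$: at $\ep_0$ the sliding segment of $y=0$ is the interval $(\tfrac12,\tfrac23)$ — the boundary lines $x=\tfrac34+\tfrac{15}{4}\varepsilon$ and $x=\tfrac34+\tfrac{5}{4}\varepsilon$ evaluate there to $\tfrac12$ and $\tfrac23$ — so $\gamma$ returns to $y=0$ strictly inside the crossing region, where both vector fields point upward, and $\gamma$ passes transversally into $\{y>0\}$.

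The third step is to continue $\gamma$ under the linear $k=1$ system from $(\tfrac79,0)$. The line $x=\tfrac12$ is invariant for this system (it is the vertical eigendirection at $p_0$), so $\gamma$ remains in $\{x>\tfrac12\}$; a short computation with the explicit solution shows the $y$-coordinate stays positive and decays to $0$, confining $\gamma$ to $\{x>\tfrac12,\ y>0\}$. Because the $k=1$ system is affine with a stable equilibrium at $p_0$ (eigenvalues $-2$ and $-\tfrac32$, by Theorem~\ref{thm:stablebit}), it is globally attracting, so $\gamma(t)\to p_0$ as $t\to\infty$. Concatenating the two arcs, $\gamma$ is an orbit that leaves $p_0$ in finite time through $\{y<0\}$ and returns to $p_0$ asymptotically through $\{y>0\}$, passing through $x_0=\tfrac12$; this is the asserted homoclinic orbit.

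The main obstacle is the placement check in the second step: one must confirm that the return of $\gamma$ to $y=0$ occurs strictly to the right of the unstable sliding segment, i.e.\ at an $x$-value exceeding $\tfrac23$, and not at the sliding segment or exactly at its endpoint, for otherwise the loop would not close as a genuine crossing. This is precisely where the explicit integration and the particular values $\alpha=\tfrac45$, $\beta=\tfrac12$ are used; it is also where the choice $\varepsilon=\ep_0$ is essential, since only then does the $\{y>0\}$ arc asymptote to a point of the splitting manifold. A secondary point to make explicit is that "homoclinic" is understood here in the sense forced by a border-collision equilibrium: $\gamma$ departs $p_0$ at finite time — the finite-time escape depicted in Figure~\ref{fig:localpic} — while reaching it only as $t\to+\infty$, an asymmetry impossible for a smooth equilibrium.
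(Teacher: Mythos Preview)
Your proof is correct and shares the paper's overall skeleton---launch from $p_0$ into $\{y<0\}$ under the $k=0$ field, return to the splitting manifold, then converge back to $p_0$ under the $k=1$ field---but you execute it by explicit integration of the two linear systems rather than by the paper's qualitative trapping argument. The paper never locates the re-entry point (it asserts, imprecisely, that the lower arc crosses at ``some point $x>1$'') and instead argues that every trajectory in $\{y>0,\ x>x_0\}$ is funnelled to $p_0$: it cannot recross $y=0$ to the right of $x_0$ because there $\dot y>0$ for the $k=1$ field, and it cannot pass to $x<x_0$ because the vertical eigendirection $x=x_0$ is an invariant barrier. Your computation instead pins the re-entry at $x=\tfrac79$, strictly to the right of the sliding interval $(\tfrac12,\tfrac23)$, and verifies directly that the upper arc stays in $\{y>0\}$. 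This settles the crossing-versus-sliding question that the paper leaves implicit and makes the loop entirely explicit; the trade-off is that your argument leans on the specific values $\alpha=\tfrac45$, $\beta=\tfrac12$, whereas the paper's squeezing argument is robust and would go through for nearby parameters without redoing any integrals.
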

\begin{proof}
There is a unique trajectory emanating from the equilibrium point $\left(x_0,0\right)$ into the region $y<0$, based on the form of the $k=0$ system.  This trajectory crosses the splitting manifold at some point $x>1$.  All trajectories with initial condition $x>x_0$, $y>0$ converge to the equilibrium point without crossing the splitting manifold.  Trajectories are not allowed to cross the vector field at any point $x>x_0$, because in that region, $\dot{y}>0$ along $y=0$.  Moreover, trajectories can't cross the splitting manifold anywhere where $x<x_0$, because there exists a vertical line trajectory along $x=x_0$.  Therefore, trajectories are squeezed from either side, and in particular the unique trajectory in the $k=0$ region, with initial condition $x=x_0$ must then return to $x=x_0$ after an infinite period, giving a homoclinic orbit.
\end{proof}

\begin{figure}[t]
	\centering
	{\includegraphics[width=0.5\textwidth]{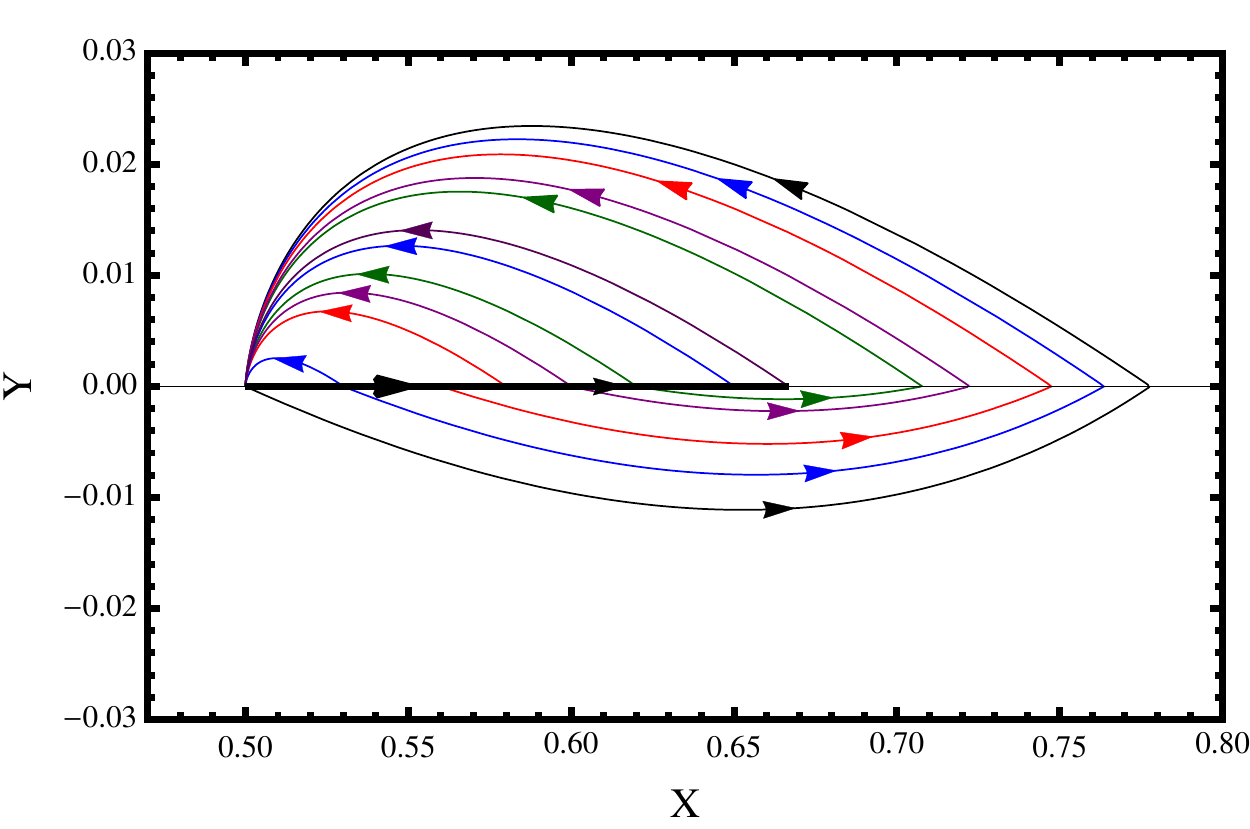}}
	
	\caption{Infinitely many homoclinic orbits occur at the bifurcation point.  Each orbit slides arbitrarily along the sliding region, before leaving in either direction to eventually converge to the boundary equilibrium.}
	\label{fig:homoclinicpic}
\end{figure}

\begin{theorem} Using Filippov dynamics, when $\ep=\ep_0$ all orbits which slide along the splitting manifold are homoclinic orbits.
\end{theorem}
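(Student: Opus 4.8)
The plan is to prove that every orbit containing a sliding segment has both its $\alpha$-limit set and its $\omega$-limit set equal to the single point $(x_0,0)$ with $x_0=\tfrac12$; recall that at $\ep=\ep_0$ this point is at once the boundary-collision point and the equilibrium \eqref{eq:eq1} of the $k=1$ subsystem. First I would record the geometry at $\ep=\ep_0$: the boundary lines $x=\tfrac34+\tfrac{15}{4}\ep$ and $x=\tfrac34+\tfrac{5}{4}\ep$ of the sliding region from the proof of Theorem \ref{thm:pseudonode} collapse to $x=\tfrac12$ and $x=\tfrac23$, so the unique sliding region is the escaping interval $\Sigma_0=\{(x,0):\tfrac12<x<\tfrac23\}$ (escaping since $\tfrac{d}{d\lambda}(f\cdot\nabla h)=-\ep>0$), and by the computation used in Theorem \ref{thm:stablebit} it contains no pseudoequilibrium. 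A short computation puts the sliding field $\dot x=1-x-k^{*}(x)x$ at $\ep=\ep_0$ into the factored form $\dot x=(3x-1)(2x-1)$, which is strictly positive on $\Sigma_0$ and vanishes only at the left endpoint $x=\tfrac12$; hence the sliding flow runs monotonically from the repelling endpoint $(x_0,0)$ toward the fold endpoint $(\tfrac23,0)$.

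For the backward limit I would use that an escaping sliding region cannot be reached from either adjacent half-plane in forward time (both subsystem fields point away from $\Sigma_0$ there), so the backward continuation of any point of $\Sigma_0$ is unique and stays on $\Sigma_0$, governed by the sliding field. Since $\dot x>0$ on $\Sigma_0$ and vanishes only at $x=\tfrac12$, any sliding orbit satisfies $x(t)\to\tfrac12$ from above as $t\to-\infty$, so its $\alpha$-limit set is $\{(x_0,0)\}$.

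For the forward limit, observe that because $\dot x>0$ on $\Sigma_0$ with no pseudoequilibrium, a sliding orbit must leave $\Sigma_0$ in finite time — through the fold endpoint $x=\tfrac23$ at the latest, or, by the forward nonuniqueness characteristic of an escaping region, earlier into $y>0$ or into $y<0$. I would then show that each possibility funnels the orbit to $(x_0,0)$. Any piece of orbit in $y>0$ is governed by the $k=1$ subsystem, whose equilibrium at $\ep=\ep_0$ is precisely $(x_0,0)$ and is globally attracting in $y>0$; since $\dot y|_{k=1,\,y=0}=(2x-1)/5>0$ for $x>\tfrac12$ and the $x$-dynamics $\dot x=1-2x$ trap $x$ in $[\tfrac12,\infty)$, such a piece cannot re-meet $\Sigma$ and converges to $(x_0,0)$ (the argument already used after Theorem \ref{thm:stablebit} and in Theorem \ref{thm:homo}). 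Any piece of orbit in $y<0$ is governed by the $k=0$ subsystem, a linear stable node whose equilibrium at $\ep=\ep_0$ lies in $y>0$; as in the proof of Theorem \ref{thm:homo} such a trajectory is bounded, cannot meet $\Sigma$ where $x\le\tfrac23$ (there $\dot y|_{k=0}\le0$, with equality only at the invisible fold $x=\tfrac23$, where $\ddot y=\tfrac25\dot x>0$), and therefore re-crosses $\Sigma$ through the crossing region $x>\tfrac23$ into $y>0$, which returns us to the previous case. In every case the $\omega$-limit set is $\{(x_0,0)\}$, so the orbit is homoclinic to the boundary equilibrium, and together with Theorem \ref{thm:homo} this yields the homoclinic explosion pictured in Figure \ref{fig:homoclinicpic}.

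The main obstacle is the forward direction: one has to invoke the escaping-region nonuniqueness carefully enough to be sure that "leaving in either direction" exhausts the continuations, to classify the endpoint $(\tfrac23,0)$ correctly as a fold (via $\ddot y=\tfrac25\dot x>0$), and then to re-use the bounded / no-invariant-set / nested-trajectory argument for the $k=0$ return without circularity. Once the sliding field is in hand in the explicit form $(3x-1)(2x-1)$ and the two smooth subsystems have been analyzed as in the preceding theorems, the remaining steps are bookkeeping.
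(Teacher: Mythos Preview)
Your argument is correct and follows the same two-case split as the paper: orbits that leave the sliding segment into $y>0$ converge directly to the boundary equilibrium, while those that leave into $y<0$ are forced to re-cross $\Sigma$ to the right of the sliding region and then converge from above. The differences are ones of rigor rather than strategy. The paper's proof is a few sentences and treats only the $\omega$-limit, taking for granted that sliding orbits emanate from the boundary equilibrium; you make this explicit by computing the Filippov sliding field at $\ep=\ep_0$ in the closed form $\dot x=(2x-1)(3x-1)$ and reading off that the $\alpha$-limit of every sliding trajectory is $(\tfrac12,0)$. For the $y<0$ return, the paper bounds the excursion by trapping it inside the homoclinic orbit of Theorem~\ref{thm:homo}, whereas you argue directly from the sign of $\dot y|_{k=0}$ on $y=0$ and the invisible-fold computation $\ddot y=\tfrac25\dot x>0$ at $x=\tfrac23$. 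Both routes work; yours is self-contained and does not need Theorem~\ref{thm:homo} as an input, while the paper's is shorter once that theorem is in hand.
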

\begin{proof}

First, because all trajectories in the $y>0$ region of phase space converge to the boundary equilibrium, it is clear that trajectories which travel arbitrarily along the sliding region and then escape to this region of phase space are homoclinic orbits.  If an orbit leaves the splitting manifold into the $y<0$ region, it is constrained by the homoclinic orbit described in Theorem \ref{thm:homo}.  Because there are no invariant sets in this region, these orbits will cross the splitting manifold, and then converge to the boundary equilibrium, and hence are also homoclinic orbits.  See Figure \ref{fig:homoclinicpic} for an illustration of these orbits.

\end{proof}

\section{The Corresponding Smooth Picture}
\label{sec:smooth}

\begin{figure}[t]
	\centering
	\subfloat[]{\includegraphics[width=0.3\textwidth]{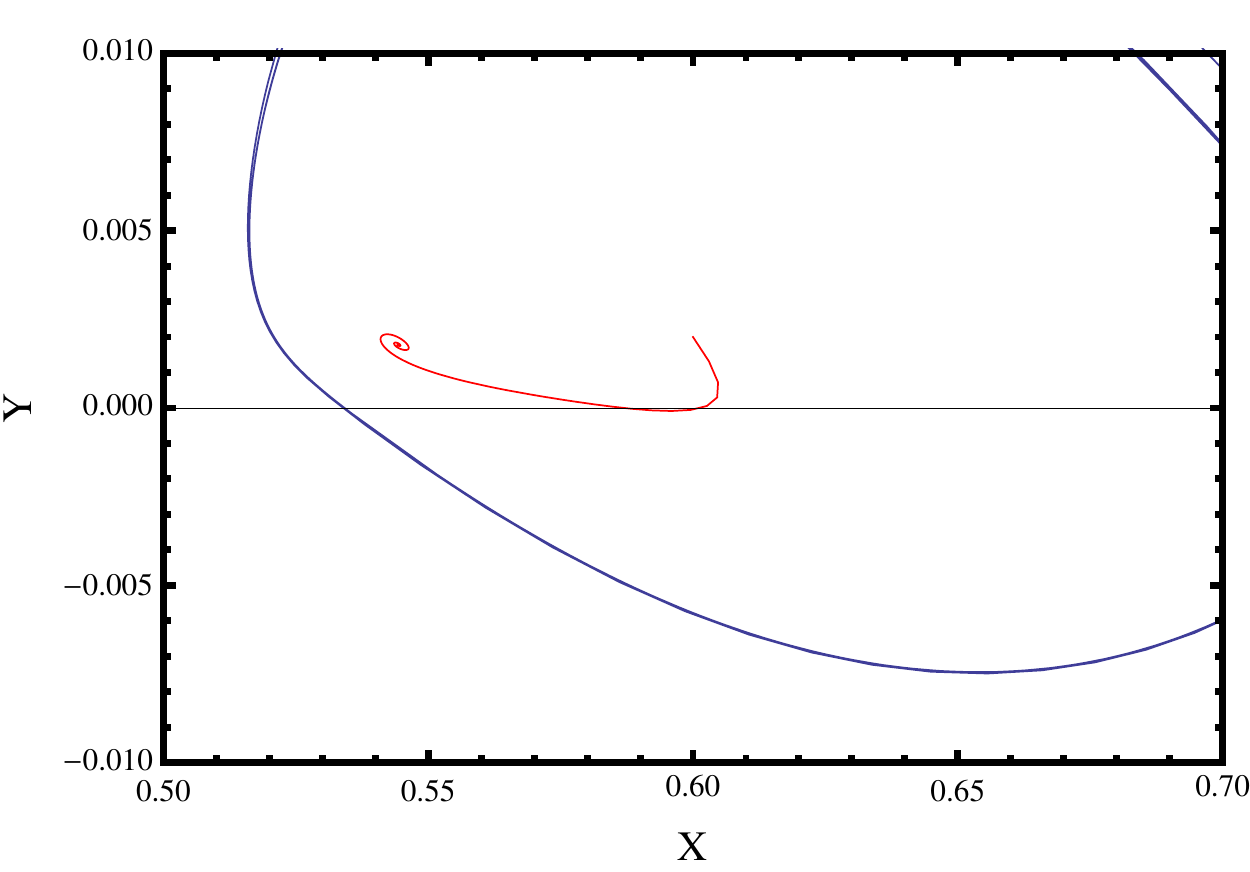}}
	\subfloat[]{\includegraphics[width=0.3\textwidth]{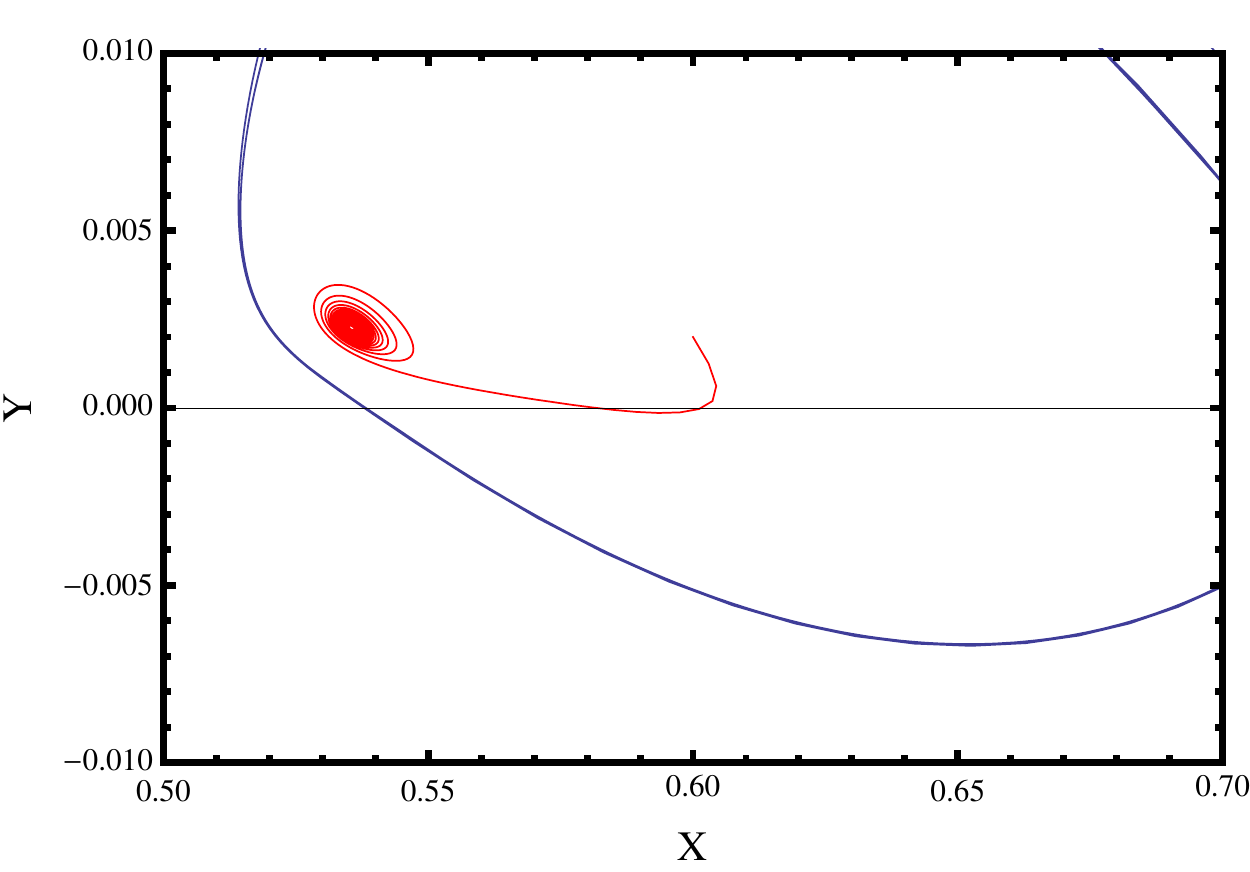}}
	\subfloat[]{\includegraphics[width=0.3\textwidth]{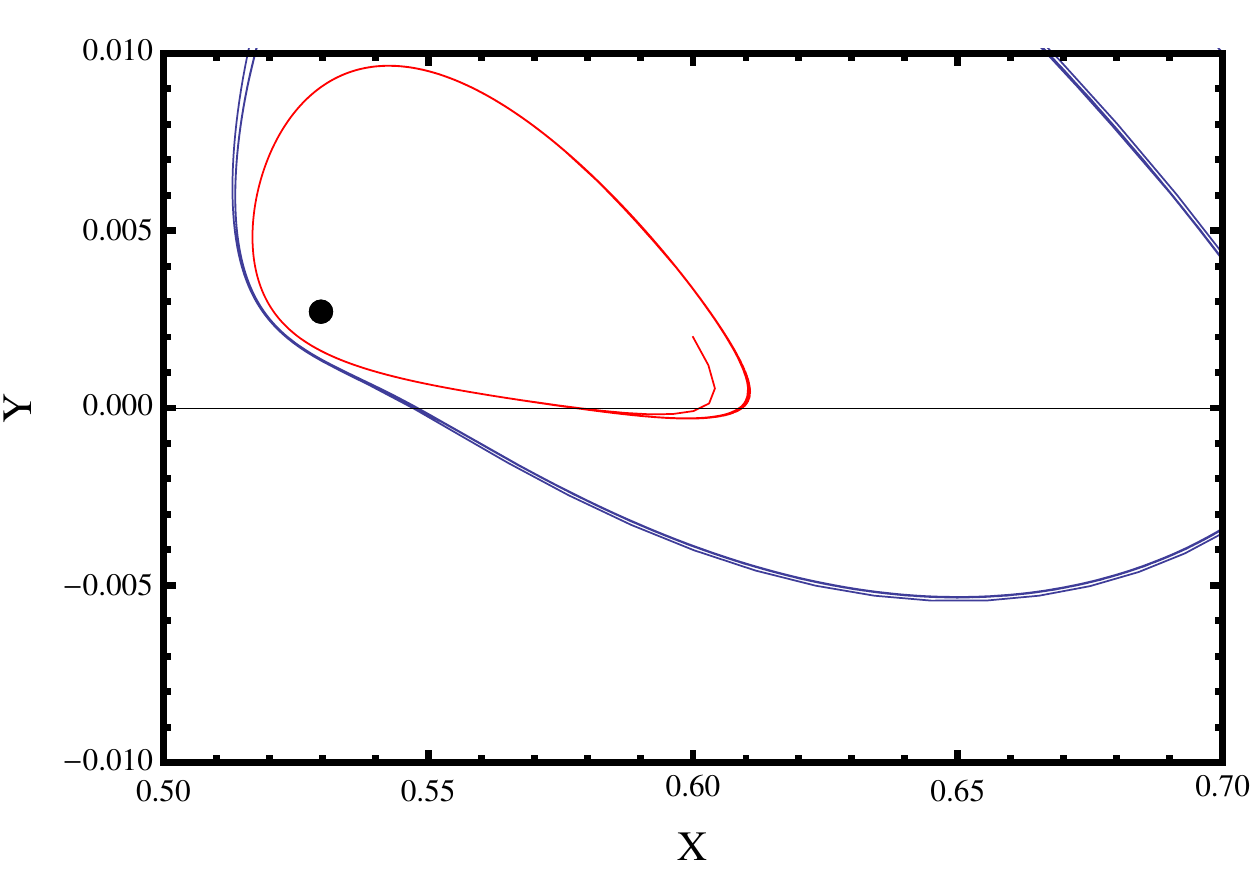}}\\
	\subfloat[]{\includegraphics[width=0.3\textwidth]{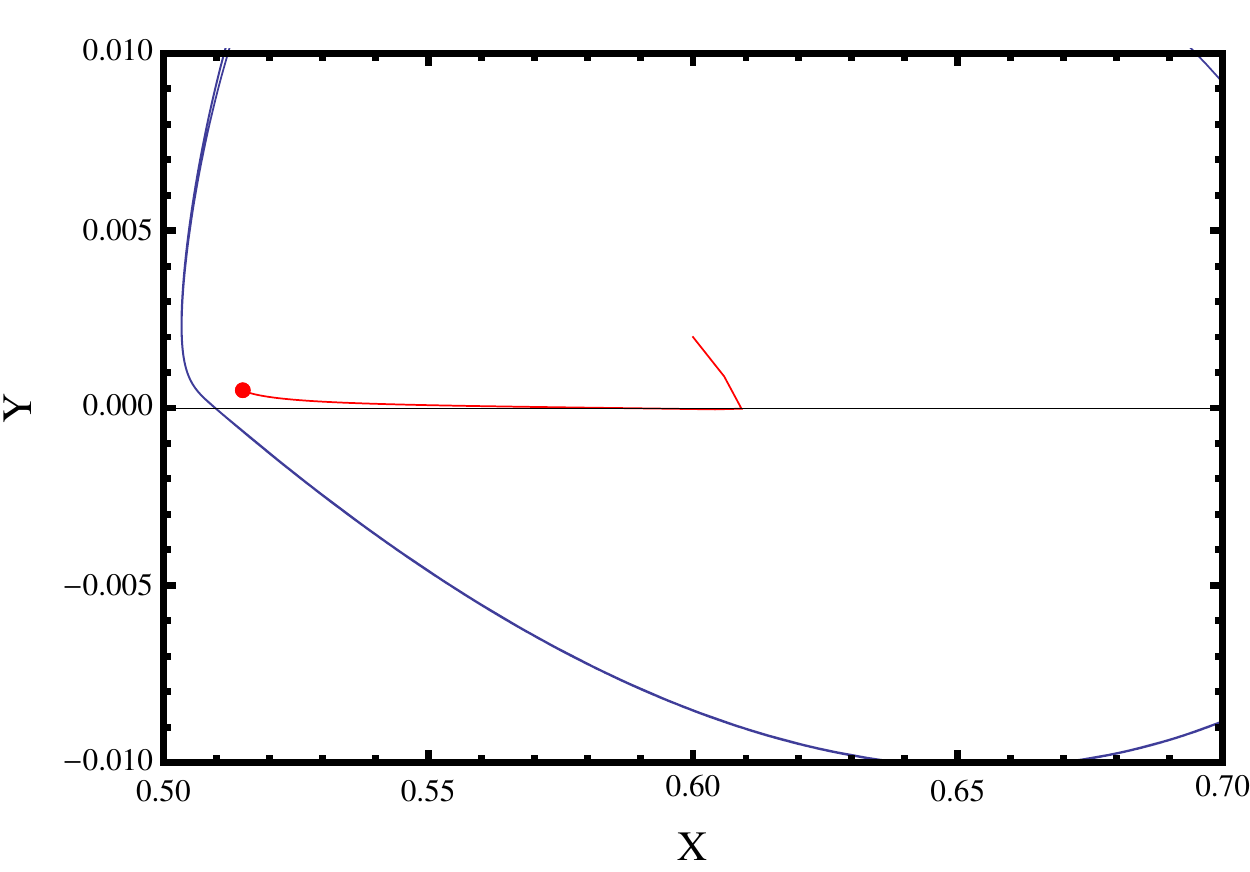}}
	\subfloat[]{\includegraphics[width=0.3\textwidth]{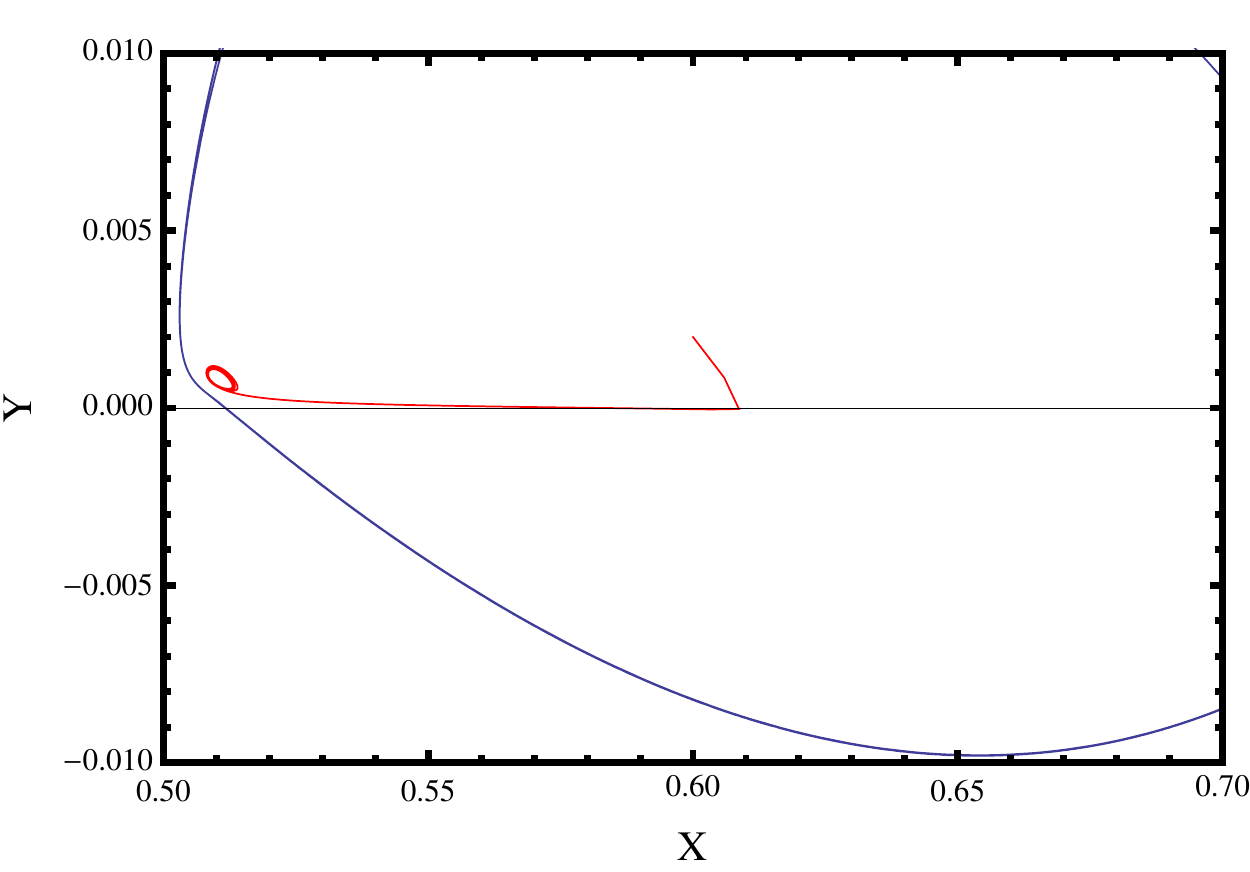}}
	\subfloat[]{\includegraphics[width=0.3\textwidth]{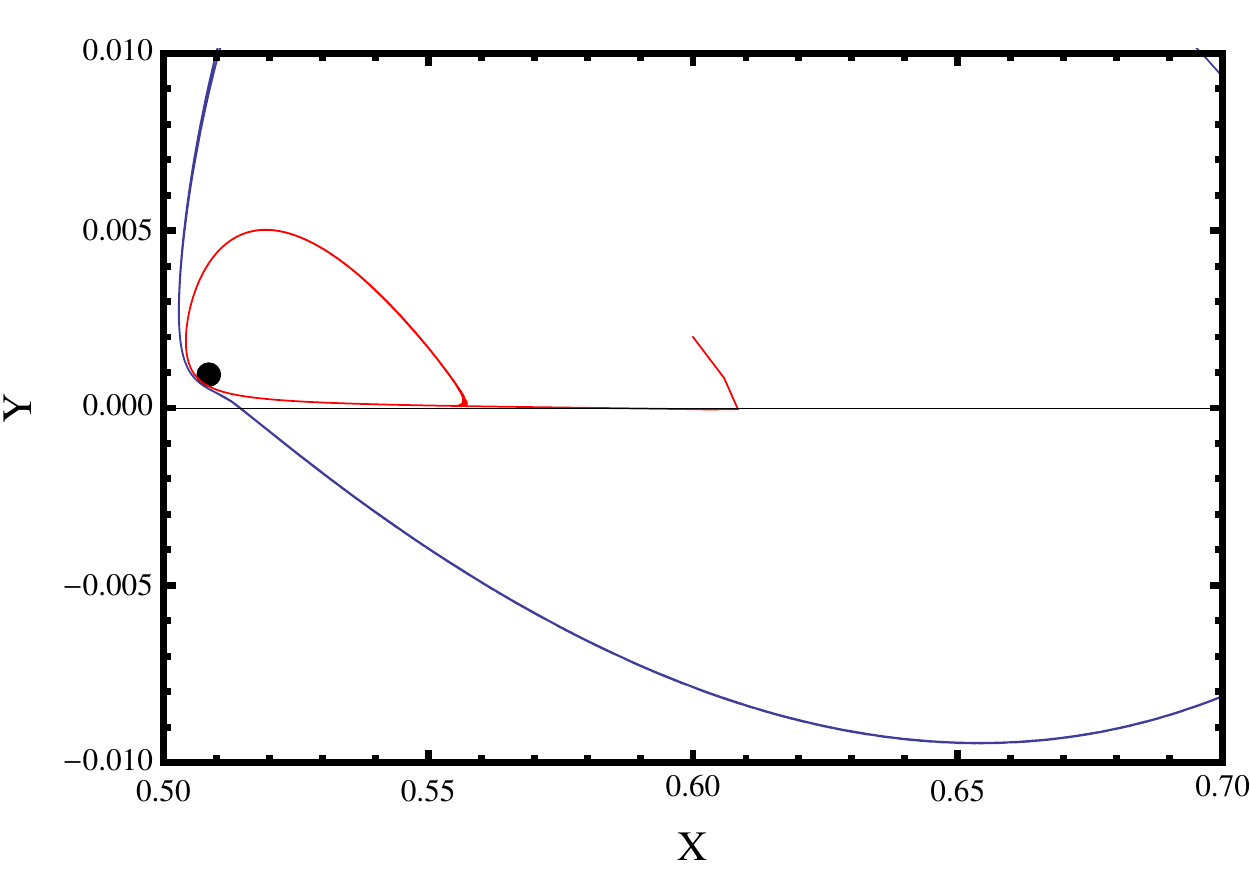}}
	\caption{The smooth system as it passes through the bifurcation.  A subcritical Hopf bifurcation and a periodic orbit saddle node bifurcation limit to a single bifurcation point as $a\rightarrow0$.  Top row: $a=10^{-3}$, A: $\ep=-0.0633$ B: $\ep=-0.0650$ C: $\ep=-0.0662$.  Bottom row: $a=10^{-4}$, D: $\ep=-0.0658$ E: $\ep=-0.0665$ F: $\ep=-0.0668$. }
	\label{fig:smooth}
\end{figure}

There are several key differences between this nonsmooth homoclinic bifurcation and a standard homoclinic bifurcation, which indicate that this phenomenon is not a limit of a smooth homoclinic bifurcation.  The bifurcation does not leave behind a saddle equilibrium point.  The homoclinic orbits are space filling.  Most importantly, there is a shear in the stable and unstable directions of the equilibrium on the homoclinic orbit.  Alternatively, the system shows some similarities to canard orbits.  The homoclinic orbits are of all sizes, as if there is a canard explosion occurring at a single point.  Moreover, the aftermath of the nonsmooth bifurcation is attraction to a larger periodic structure, analogous to a relaxation oscillation.  However, the model doesn't have the bistability inherent to canard systems.  So, one wonders which smooth phenomena might limit to this nonsmooth homoclinic bifurcation.

In this system, we can answer this question directly, because Welander formulated two versions of his model, the nonsmooth system being a the pointwise limit of a smooth system.  In the smooth system, instead of $k$ being a nonsmooth Heaviside function, 
\[
k=\frac{1}{\pi}\tan^{-1}\left(\frac{y}{a}\right)+\frac{1}{2}.
\]
Therefore, we can describe which behavior in the smooth model leads to such unusual behavior in the nonsmooth model.  Proving that the smooth phenomena limit to the homoclinic bifurcation is nontrivial, however simulations indicate that this is the case.

The smooth system contains a supercritical Hopf bifurcation which is a direct analogue to a similar phenomenon in the nonsmooth system.  However, the smooth system also contains a subcritical Hopf bifurcation in the parameter vicinity of $\varepsilon=\ep_0$.  The stable and unstable periodic orbits from these bifurcations grow until they annihilate each other in a periodic orbit saddle node bifurcation, leaving behind a globally stable equilibrium point.  The subcritical Hopf bifurcation and the periodic orbit saddle node limit to the same parameter value as $a\rightarrow0$, and the system becomes nonsmooth.

\section{Discussion}
\label{sec:discussion}

There are a few interesting implications of this result.  First, it is a testament to the complexities possible in nonsmooth systems, even in the simple case of equilibria colliding with a splitting manifold.  Consideration of many systems is necessary before any generalizations can be made about classification of behavior in these systems.  There are several interesting codimension one phenomena in Welander's model, however, a full bifurcation analysis of the model has not been undertaken.  It is possible that with inclusion of the other two model parameters the system contains bifurcations like a Takens Bogdanov, which contains some of the features seen in this analysis.  More analysis is necessary to determine if this is the case.  Moreover, a more general relationship between smooth and nonsmooth systems is not clear.  In this system, the nonsmooth structure is qualitatively different from the smooth structure.  In fact, even slightly away from the bifurcation point, the periodic orbit which is unique in the nonsmooth system does not perturb to a unique periodic orbit in the smooth system.  Instead, the smooth system has two periodic orbits, one stable and one unstable.  This is in spite of the nice (monotone) behavior of the smooth approximation to the Heaviside function.  It is possible that the homoclinic explosion is a fundamentally nonsmooth phenomenon, which is the limiting behavior of many different smooth behaviors.  So, this is yet another indication that care must be taken when using nonsmooth approximations to smooth systems.

\section*{Acknowledgments}
The author would like to thank Richard McGehee, Mike Jeffrey, Paul Glendinning, and Mary Silber for their input and advice on this work.

\nocite{*}
\newpage
\bibliographystyle{amsplain}
\bibliography{researchbib}

\providecommand{\bysame}{\leavevmode\hbox to3em{\hrulefill}\thinspace}
\providecommand{\MR}{\relax\ifhmode\unskip\space\fi MR }
\providecommand{\MRhref}[2]{%
  \href{http://www.ams.org/mathscinet-getitem?mr=#1}{#2}
}
\providecommand{\href}[2]{#2}
\begin{thebibliography}{10}

\bibitem{buzzi2006singular}
Claudio~A Buzzi, Paulo~R da~Silva, and Marco~A Teixeira, \emph{A singular
  approach to discontinuous vector fields on the plane}, Journal of
  Differential Equations \textbf{231} (2006), no.~2, 633--655.

\bibitem{Colombo12}
A.~Colombo, M.~Di~Bernardo, S.~J. Hogan, and M.~R. Jeffrey, \emph{Bifurcations
  of piecewise smooth flows: Perspectives, methodologies and open problems},
  Physica D \textbf{241} (2012), no.~22, 1845--1860.

\bibitem{Dercole07}
F.~Dercole, A.~Gragnani, and S.~Rinaldi, \emph{Bifurcation analysis of
  piecewise smooth ecological models}, Theor. Popul. Biol. \textbf{72} (2007),
  no.~2, 197--213.

\bibitem{diBernardo}
M.~di~Bernardo, C.~Budd, A.~R. Champneys, and P.~Kowalczyk,
  \emph{Piecewise-smooth dynamical systems: theory and applications}, vol. 163,
  Springer, 2008.

\bibitem{eisenman2009nonlinear}
I~Eisenman and JS~Wettlaufer, \emph{Nonlinear threshold behavior during the
  loss of arctic sea ice}, Proceedings of the National Academy of Sciences
  \textbf{106} (2009), no.~1, 28--32.

\bibitem{filippov}
A.~F. Filippov and F.~M. Arscott, \emph{Differential equations with
  discontinuous righthand sides: Control systems}, vol.~18, Springer, 1988.

\bibitem{Guardia11}
M.~Guardia, T.~M. Seara, and M.~A. Teixeira, \emph{Generic bifurcations of low
  codimension of planar filippov systems}, J. Differ. Equations \textbf{250}
  (2011), no.~4, 1967--2023.

\bibitem{hill2015analysis}
Kaitlin Hill, Dorian~S Abbot, and Mary Silber, \emph{Analysis of an arctic sea
  ice loss model in the limit of a discontinuous albedo}, arXiv preprint
  arXiv:1509.00059 (2015).

\bibitem{Jeffrey11}
M.~R. Jeffrey, \emph{Nondeterminism in the limit of nonsmooth dynamics}, Phys.
  Rev. Lett. \textbf{106} (2011), no.~25, 254103.

\bibitem{Jeffrey14hidden}
\bysame, \emph{Hidden dynamics in models of discontinuity and switching},
  Physica D \textbf{273} (2014), 34--45.

\bibitem{kristiansen2015regularizations}
K~Uldall Kristiansen and SJ~Hogan, \emph{Regularizations of two-fold
  bifurcations in planar piecewise smooth systems using blow up}, arXiv
  preprint arXiv:1502.06210 (2015).

\bibitem{Kuznetsov03}
Y.~A. Kuznetsov, S.~Rinaldi, and A.~Gragnani, \emph{One-parameter bifurcations
  in planar filippov systems}, Int. J. Bifurcat. Chaos \textbf{13} (2003),
  no.~08, 2157--2188.

\bibitem{llibre2009study}
Jaume Llibre, Paulo~R da~Silva, and Marco~A Teixeira, \emph{Study of
  singularities in nonsmooth dynamical systems via singular perturbation}, SIAM
  Journal on Applied Dynamical Systems \textbf{8} (2009), no.~1, 508--526.

\bibitem{makarenkov}
O.~Makarenkov and J.~S.~W. Lamb, \emph{Dynamics and bifurcations of nonsmooth
  systems: A survey}, Physica D \textbf{241} (2012), no.~22, 1826--1844.

\bibitem{sotomayor1996regularization}
J~Sotomayor and MA~Teixeira, \emph{Regularization of discontinuous vector
  fields}, International Conference on Differential Equations, Lisboa, 1996,
  pp.~207--223.

\bibitem{Stommel1961}
H.~Stommel, \emph{Thermohaline convection with two stable regimes of flow},
  Tellus \textbf{13} (1961), 224.

\bibitem{teixeira}
M.~A. Teixeira and P.~R. da~Silva, \emph{Regularization and singular
  perturbation techniques for non-smooth systems}, Physica D \textbf{241}
  (2012), no.~22, 1948--1955.

\bibitem{Welander82}
P.~Welander, \emph{A simple heat-salt oscillator}, Dyn. Atmos. Oceans
  \textbf{6} (1982), no.~4, 233--242.

\bibitem{Welander1986}
\bysame, \emph{Thermohaline effects in the ocean circulation and related simple
  models}, NATO ASI series. Series C, Mathematical and physical sciences
  \textbf{190} (1986), 163--200.

\end{thebibliography}

\end{document}